\newtheorem{thm}{Theorem}
\newtheorem{Definition}[thm]{Definition}
\newtheorem{Example}[thm]{Example}
\newcommand{\Z}{\mathbb{Z}}
\newcommand{\ba}{\begin{eqnarray*}}
\newcommand{\ea}{\end{eqnarray*}}
\newcommand{\be}{\begin{enumerate}}
\newcommand{\bes}{\begin{enumerate}\topsep=1.5mm \itemsep=-1mm}
\newcommand{\ee}{\end{enumerate}}
\title{A Note on the Critical Group of a Line Graph}
\author{David Perkinson}
\email{davidp@reed.edu}
\author{Nick Salter}
\email{saltern@reed.edu}
\author{Tianyuan Xu}
\email{xut@reed.edu}
\address{Reed College, Portland OR, 97202}
\begin{document} 
\maketitle

\begin{abstract}
  This note answers a question posed by Levine in \cite{levine}.  The main result is
  Theorem~\ref{main theorem} which shows that under certain circumstances a
  critical group of a directed graph is the quotient of a critical group of its
  directed line graph.  
\end{abstract}
\section{Introduction}
Let $G$ be a finite multidigraph with vertices $V$ and edges $E$.  Loops
are allowed in $G$, and we make no connectivity assumptions.  Each edge $e\in E$ has a tail $e^-$ and a target $e^+$. Let $\Z V$
and $\Z E$ be the free abelian groups on $V$ and $E$, respectively.  The
{\em Laplacian}\footnote{The mapping $\Lambda\colon\Z^V\to\Z^V$ defined by
$\Lambda(f)(v)=\sum_{(v,u)\in E}(f(v)-f(u))$ for $v\in V$ is often called the Laplacian of
$G$.  It is the negative $\Z$-dual (i.e., the transpose) of $\Delta_G$.} of $G$ is the $\Z$-linear mapping
$\Delta_G:\Z V\to \Z V$ determined by $\Delta_G(v)=\sum_{(v,u)\in E}(u-v)$ for $v\in
V$.  
Given
$w_*\in V$, define 
\begin{align*}
\phi=\phi_{G,w_*}\colon \Z V&\to\Z V\\
v&\mapsto
\left\{
\begin{array}{cl}
  \Delta_G(v)&\mbox{if $v\neq w_*$},\\
  w_*&\mbox{if $v=w_*$}.
\end{array}
\right.
\end{align*}
The {\em critical group} for $G$ with respect to $w_*$ is the cokernel of
$\phi$:
\[
K(G,w_*):=\mathrm{cok}\,\phi.
\]
The {\em line graph}, $\mathcal{L}G$, for $G$ is the multidigraph whose vertices are the edges of
$G$ and whose edges are $(e,f)$ with $e^+=f^-$.  As with $G$, we have the
Laplacian $\Delta_{\mathcal{L}G}$ and the critical group
$K(\mathcal{L}G,e_*):=\mbox{cok}\,\phi_{\mathcal{L}G,e_*}$ for each $e_*\in E$. 

If every vertex of $G$ has a directed path to $w_*$ then $K(G,w_*)$ is called
the {\em sandpile group} for $G$ with sink $w_*$.  A {\em directed spanning
tree} of $G$ rooted at $w_*$ is a directed subgraph containing all of the
vertices of~$G$, having no directed cycles, and for which $w_*$ has out-degree
$0$ and every other vertex has out-degree $1$.  Let $\kappa(G,w_*)$ denote the
number of directed spanning trees rooted at $w_*$. It is a well-known
consequence of the matrix-tree theorem that the number of elements of the
sandpile group with sink $w_*$ is equal to $\kappa(G,w_*)$.  For a basic
exposition of the properties of the sandpile group, the reader is referred to
\cite{HLPW}.

In his paper, \cite{levine}, Levine shows that if $e_*=(w_*,v_*)$, then
$\kappa(G,w_*)$ divides $\kappa(\mathcal{L}G,e_*)$ under the hypotheses of our
Theorem~\ref{main theorem}.  This leads him to ask the natural question as to
whether $K(G,w_*)$ is a subgroup or quotient of $K(\mathcal{L}G,e_*)$.  In this
note, we answer this question affirmatively by demonstrating a surjection
$K(\mathcal{L}G,e_*)\to K(G,w_*)$.  Further, in the case in which the out-degree
of each vertex of $G$ is a fixed integer $k$, we show the kernel of this
surjection is the $k$-torsion subgroup of $K(\mathcal{L}G,e_*)$.  These results
appear as Theorem~\ref{main theorem} and may be seen as analogous to Theorem~1.2
of \cite{levine}.  In \cite{levine}, partially for convenience, some assumptions
are made about the connectivity of $G$ which are not made in this note.

For related work on the critical group of a line graph for an undirected graph, see \cite{reiner}.
\section{Results}
Fix $e_*=(w_*,v_*)\in E$.  Define the modified target mapping
\begin{align*}
\tau\colon \Z E&\to\Z V\\
e&\mapsto
\left\{
\begin{array}{cl}
  e^+&\mbox{if $e\neq e_*$},\\
  0&\mbox{if $e=e_*$}.
\end{array}
\right.
\end{align*}
Also define 
\begin{align*}
\rho\colon \Z E&\to\Z V\\
e&\mapsto
\left\{
\begin{array}{cl}
  \Delta_G(w_*)-v_*-w_*+e^+&\mbox{if $e\neq e_*$},\\
  0&\mbox{if $e=e_*$}.
\end{array}
\right.
\end{align*}

Let $k$ be a positive integer.  The graph $G$ is {\em $k$-out-regular} if the
out-degree of each of its vertices is $k$.
\begin{thm}\label{main theorem}
  If $\mathrm{indeg}(v)\geq 1$ for all $v\in V$ and $\mathrm{indeg}(v_*)\geq 2$,
  then 
  \[
  \rho\colon \Z E\to \Z V
  \]
  descends to a surjective homomorphism $\overline{\rho}\colon K(\mathcal{L}G,e_*)\to K(G,w_*)$.
  
  Moreover, if $G$ is $k$-out-regular, the kernel of $\overline{\rho}$ is the $k$-torsion
  subgroup of $K(\mathcal{L}G,e_*)$.
\end{thm}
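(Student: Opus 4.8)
The plan is to treat the two assertions with one common toolkit. Besides the maps $\tau$ and $\rho$ already defined, I would bring in the out-edge map $\sigma\colon\Z V\to\Z E$, $\sigma(v)=\sum_{e^-=v}e$, and the ordinary target map $t\colon\Z E\to\Z V$, $t(e)=e^+$. Evaluating on generators gives the hypothesis-free intertwining identity $t\circ\Delta_{\mathcal{L}G}=\Delta_G\circ t$, and, once $G$ is $k$-out-regular, the two further identities $t\circ\sigma=\Delta_G+k\,\mathrm{Id}$ and $\sigma\circ t=\Delta_{\mathcal{L}G}+k\,\mathrm{Id}$. These are exactly what let one pass between the two Laplacians, and everything below is bookkeeping around them at the sink $w_*$ and the deleted edge $e_*$.

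First I would verify that $\rho$ sends $\mathrm{im}\,\phi_{\mathcal{L}G,e_*}$ into $\mathrm{im}\,\phi_{G,w_*}$ by evaluating $\rho$ on the generators $\phi_{\mathcal{L}G,e_*}(e)$. For $e\neq e_*$ the constant $c=\Delta_G(w_*)-v_*-w_*$ cancels in the telescoping sum $\sum_{f^-=e^+}(\rho(f)-\rho(e))$, leaving $\Delta_G(e^+)=\phi_{G,w_*}(e^+)$ when $e^+\neq w_*$; the one delicate case is $e^+=w_*$, where $e_*$ itself occurs among the $f$'s and the constant $c$ is engineered precisely so that the sum collapses to $w_*=\phi_{G,w_*}(w_*)$. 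The remaining generator $\phi_{\mathcal{L}G,e_*}(e_*)=e_*$ maps to $\rho(e_*)=0$. Hence $\overline{\rho}$ is defined, with no use of the indegree hypotheses. Those hypotheses enter only for surjectivity: $\mathrm{indeg}(v)\geq1$ for all $v$ and $\mathrm{indeg}(v_*)\geq2$ guarantee that every vertex $v$ is the target of some edge $e\neq e_*$, so $\rho(e)=c+v$; subtracting two such shows every difference $v-v'$ lies in $\mathrm{im}\,\rho$, and since $w_*\in\mathrm{im}\,\phi_{G,w_*}$ we recover each $v=(v-w_*)+w_*$ modulo the images, so $\overline{\rho}$ is onto.

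For the torsion statement assume $k$-out-regularity and write $A=K(\mathcal{L}G,e_*)$, $B=K(G,w_*)$. Reducing $\sigma\circ t=\Delta_{\mathcal{L}G}+k\,\mathrm{Id}$ modulo $\mathrm{im}\,\phi_{\mathcal{L}G,e_*}$, using $\Delta_{\mathcal{L}G}(e)\equiv0$ for $e\neq e_*$ and $\Delta_{\mathcal{L}G}(e_*)\equiv\sigma(v_*)$, yields for any lift $\tilde x$ of $x\in A$ the clean identity $kx=\overline{\sigma(\tau(\tilde x))}$ in $A$; in particular $\sigma\circ\tau$ induces multiplication by $k$ on $A$. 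To get $\ker\overline{\rho}\subseteq A[k]$ I would take $x$ with $\overline{\rho}(x)=0$, write $\rho(\tilde x)=\phi_{G,w_*}(z)$, and substitute $\tau(\tilde x)=\rho(\tilde x)-\mu\,c$ with $\mu=\mathrm{aug}(\tilde x)-\tilde x_{e_*}$; a short computation collapses $kx$ to a single integer multiple $\lambda\,\overline{\sigma(w_*)}$, and comparing total vertex-degrees (augmentations) on the two sides of $\rho(\tilde x)=\phi_{G,w_*}(z)$ forces $z_{w_*}=-\mu$, whence $\lambda=0$ and $kx=0$.

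The reverse inclusion $A[k]\subseteq\ker\overline{\rho}$ is where the real work lies, and I expect it to be the main obstacle, because merely applying $\rho$ to the relation defining $A[k]$ returns only $k\overline{\rho}(x)=0$, not $\overline{\rho}(x)=0$. The strategy is to manufacture a genuine back-map $\overline{\sigma}\colon B\to A$ from $\sigma$ after the same sink-modification that produced $\rho$ (replacing $\sigma(w_*)$ by $e_*$), check that it descends, and use the identities above to show both round-trips $\overline{\sigma}\,\overline{\rho}$ and $\overline{\rho}\,\overline{\sigma}$ equal multiplication by $k$. Granting this, the first inclusion is formal and the second becomes injectivity of $\overline{\sigma}$; in the finite case (guaranteed, e.g., via the matrix-tree theorem) one has $|A|=|\ker\overline{\rho}|\cdot|B|$, so it suffices to prove $|A[k]|=|A|/|B|$, which is the quantitative shadow of Levine's divisibility $\kappa(G,w_*)\mid\kappa(\mathcal{L}G,e_*)$ and which I would extract by comparing Smith normal forms of the reduced Laplacians through the factorizations $\Delta_G=t\sigma-k\,\mathrm{Id}$ and $\Delta_{\mathcal{L}G}=\sigma t-k\,\mathrm{Id}$. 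The consistently delicate point, in the descent of $\overline{\sigma}$ and in each composite, is that $\sigma$ does not descend on the nose and must be corrected at $w_*$ and $e_*$; keeping those corrections compatible with the augmentation identity used above is the crux of the argument.
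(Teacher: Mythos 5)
Your handling of the first assertion and of the inclusion $\ker\overline{\rho}\subseteq A[k]$ (keeping your notation $A=K(\mathcal{L}G,e_*)$, $B=K(G,w_*)$) is correct and close to the paper's: the generator-by-generator check that $\rho(\mathrm{im}\,\phi_{\mathcal{L}G,e_*})\subseteq\mathrm{im}\,\phi_{G,w_*}$ is the paper's commutative-diagram computation (the paper routes it through the factorization $\rho=\rho_0\circ\tau$, with $\rho_0(v)=\Delta_G(w_*)-v_*-w_*+v$ an involution), your surjectivity argument uses the in-degree hypotheses exactly as the paper does, and the fact that $\sigma\circ\tau$ induces multiplication by $k$ on $A$ is the paper's identity $\overline{\sigma}(\overline{\tau}(e))=k\,e$; your augmentation bookkeeping for $\ker\overline{\rho}\subseteq A[k]$ checks out.

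The gap is in $A[k]\subseteq\ker\overline{\rho}$, exactly where you anticipated trouble, and it is twofold. First, the back-map you propose---$\sigma$ with $\sigma(w_*)$ replaced by $e_*$, call it $\sigma'$---does not descend to a map $B\to A$. Concretely, let $G$ be the bidirected triangle on $\{1,2,3\}$ with $w_*=1$, $e_*=(1,2)$, so $k=2$. Then $\phi_G(2)=\Delta_G(2)=1+3-2\cdot 2$, and $\sigma'(\Delta_G(2))=e_*+\sigma(3)-2\sigma(2)=\sigma(\Delta_G(2))-(1,3)$. Here $\sigma(\Delta_G(2))=\Delta_{\mathcal{L}G}((2,1))+\Delta_{\mathcal{L}G}((2,3))$ does lie in $\mathrm{im}\,\phi_{\mathcal{L}G,e_*}$, but the class of the edge $(1,3)$ in $A$ is nonzero (one checks it has order $3$; in this example $|A|=12$), so $\sigma'(\phi_G(2))\notin\mathrm{im}\,\phi_{\mathcal{L}G,e_*}$. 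The correction that works is not a modification of $\sigma$ at $w_*$ but a change of presentation of $B$: the paper replaces $\phi_G$ by $\psi$, where $\psi(v)=\Delta_G(v)$ for $v\neq w_*$ and $\psi(w_*)=\Delta_G(w_*)-v_*$ (the two presentations are identified via the involution $\rho_0$), and then the \emph{unmodified} $\sigma$ carries $\mathrm{im}\,\psi$ into $\mathrm{im}\,\phi_{\mathcal{L}G,e_*}$.

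Second, and more fundamentally, your route to injectivity of the back-map is a counting argument that is not available here. The theorem carries no connectivity hypothesis, and $k$-out-regularity plus the in-degree conditions do not force every vertex to have a directed path to $w_*$: for a disjoint union of two bidirected triangles with $w_*,e_*$ in one of them, all hypotheses hold, yet $\kappa(G,w_*)=0$ and $A$, $B$ are infinite, so ``$|A|=|\ker\overline{\rho}|\cdot|B|$'' and ``$|A[k]|=|A|/|B|$'' are unusable. Moreover, even when the groups are finite, $|A[k]|=|A|/|B|$ is not something extractable by ``comparing Smith normal forms through the factorizations''; it is essentially the assertion being proved (Levine's theorem), so invoking it is circular. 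The paper's substitute is a direct, finiteness-free injectivity proof: if $\eta=\sum_v a_v v$ satisfies $\sigma(\eta)=\sum_{e\neq e_*}b_e\Delta_{\mathcal{L}G}(e)+ce_*$, then comparing coefficients edge by edge gives $a_{e^-}=\sum_{f^+=e^-,f\neq e_*}b_f-kb_e$ for all $e\neq e_*$; consequently $F(v)=\frac{1}{k}\bigl(\sum_{f^+=v,f\neq e_*}b_f-a_v\bigr)$ satisfies $F(e^-)=b_e$, and, choosing for each $v$ an edge $e_v\neq e_*$ with $e_v^-=v$ (possible since $k>1$; the case $k=1$ is vacuous), one rewrites $\eta=\sum_{v\neq w_*}F(v)\Delta_G(v)+F(w_*)\bigl(\Delta_G(w_*)-v_*\bigr)\in\mathrm{im}\,\psi$, so $\eta=0$ in $\mathrm{cok}\,\psi$. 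An explicit argument of this kind is what your proposal still needs to supply.
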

\begin{proof}
  Let $\rho_0\colon\Z V\to \Z V$ be the homomorphism defined on vertices $v\in
  V$ by
  \[
  \rho_0(v):=\Delta_{G}(w_*)-v_*-w_*+v
  \]
  so that $\rho=\rho_0\circ\tau$.  The mapping $\rho_0$ is an isomorphism, its
  inverse being itself:
  \begin{align*}
    \rho_0^2(v)&=\rho_0(\Delta_{G}(w_*)-v_*-w_*+v)\\
    &=\sum_{e^-=w_*}(\rho_0(e^+)-\rho_0(w_*))-\rho_0(v_*)-\rho_0(w_*)+\rho_0(v)\\
    &=\Delta_{G}(w_*)-\rho_0(v_*)-\rho_0(w_*)+\rho_0(v)\\
    &=v.
  \end{align*}

  Let $\psi\colon\Z V\to\Z V$ be the homomorphism defined on vertices 
  $v\in V$ by
  \[
  \psi(v):=
  \begin{cases}
    \Delta_{G}(v)&\text{if $v\neq w_*$},\\
    \Delta_{G}(w_*)-v_*&\text{if $v=w_*$}.
  \end{cases}
  \]
  Let $\phi_{G}$ and $\phi_{\mathcal{L}G}$ denote $\phi_{G,w_*}$ and
  $\phi_{\mathcal{L}G,e_*}$, respectively.
We claim the following diagram commutes:
\[
\xymatrix{
\Z E\ar[d]_{\tau}\ar[rr]^{\phi_{\mathcal{L}G}}&
&\Z E \ar[d]^{\tau}\\
\Z V\ar@{=}[d]\ar[rr]^{\psi}&
&\Z V\ar[d]^{\rho_0}\\
\Z V\ar[rr]^{\phi_G}&&\Z V.
}
\]
To prove commutativity of the top square of the diagram, first suppose $e\neq e_*$.
Then
\[
\tau(\phi_{\mathcal{L}G}(e))=\tau(\Delta_{\mathcal{L}G}(e))=\tau\left(\sum_{f^-=e^+}(f-e)\right).
\]
If $e\neq e_*$ and $e^+\neq w_*$, then
\[
\tau\left(\sum_{f^-=e^+}(f-e)\right)=\sum_{f^-=e^+}(f^+-e^+)=\Delta_{G}(e^+)=\psi(\tau(e)).
\]
On the other hand, if $e\neq e_*$ and $e^+=w_*$, then 
\begin{align*}
\tau\left(\sum_{f^-=e^+}(f-e)\right)&=\sum_{f^-=e^+,f\neq
e_*}(f^+-e^+)+\tau(e_*-e)\\
&=\sum_{f^-=e^+,f\neq e_*}(f^+-e^+)-w_*\\
&=\Delta_G(w_*)-v_*=\psi(\tau(e)).
\end{align*}
Therefore, $\tau(\phi_{\mathcal{L}G}(e))=\psi(\tau(e))$ holds if $e\neq e_*$.
Moreover, the equality still holds if $e=e_*$ since $\tau(e_*)=0$.  Hence, the
top square of the diagram commutes. 

To prove that the bottom square of the diagram commutes, there are two cases.
First, if $v\neq w_*$, then 
\[
\rho_0(\psi(v))=\sum_{(v,u)\in E}(\rho_0(u)-\rho_0(v))=\sum_{(v,u)\in
E}(u-v)=\Delta_G(v)=\phi_G(v).
\]
Second, if $v=w_*$, then
\[
\rho_0(\psi(v))=\rho_0(\Delta_G(w_*)-v_*)=\Delta_G(w_*)-\rho_0(v_*)=w_*=\phi_G(v).
\]

From the commutativity of the diagram, the cokernel of $\psi$ is isomorphic to $K(G,w_*)$, and
$\rho=\rho_0\circ\tau$ descends to a homomorphism $\overline{\rho}\colon
K(\mathcal{L}G,e_*)\to K(G,w_*)$ as claimed.  The hypothesis on the in-degrees of
the vertices assures that~$\tau$, hence~$\overline{\rho}$, is surjective.

Now suppose that $G$, hence $\mathcal{L}$G, is $k$-out-regular.  This part of
our proof is an adaptation of that given for Theorem 1.2 in \cite{levine}. Since $\rho_0$
is an isomorphism, it suffices to show that the kernel of the induced map,
$\overline{\tau}\colon K(\mathcal{L}G,e_*)\to\mathrm{cok}\,\psi$ has kernel
equal to the $k$-torsion of $K(\mathcal{L}G,e_*)$.  To this end, define the
homomorphism $\sigma\colon\Z V\to\Z E$, given on vertices $v\in V$ by 
\[
\sigma(v):=\sum_{e^-=v}e.
\]
We claim that the image of $\sigma\circ\psi$ lies in the image of
$\phi_{\mathcal{L}G}$, so that $\sigma$ induces a map, $\overline{\sigma}$, between $\mathrm{cok}\,\psi$ and
$K(\mathcal{L}G,e_*)$.  To see this, first note that for $v\in V$,
\begin{align*}
  \sigma(\Delta_G(v))&=\sigma\left(\sum_{e^-=v}e^+-kv\right)\\
  &=\sum_{e^-=v}\sum_{f^-=e^+}f-k\sum_{e^-=v}e\\
  &=\sum_{e^-=v}\Delta_{\mathcal{L}G}(e)
\end{align*}
Therefore, for $v\neq w_*$, it follows that $\sigma(\psi(v))$ is in the image
of $\phi_{\mathcal{L}G}$.  On the other hand, using the calculation just made,
\begin{align*}
  \sigma(\Delta_G(w_*)-v_*)&=\sum_{e^-=w_*}\Delta_{\mathcal{L}G}(e)-\sum_{f^-=v^*}f\\
  &=\sum_{e^-=w_*}\Delta_{\mathcal{L}G}(e)-\left(\sum_{f^-=v^*}f-k\,e_*+k\,e_*\right)\\
  &=\sum_{e^-=w_*}\Delta_{\mathcal{L}G}(e)-\Delta_{\mathcal{L}G}(e_*)-k\,e_*\\
  &=\sum_{e^-=w_*,e\neq e_*}\Delta_{\mathcal{L}G}(e)-k\,e_*,
\end{align*}
which is also in the image of $\phi_{\mathcal{L}G}$.

Now note that for $e\neq e_*$, 
\[
\overline{\sigma}(\overline{\tau}(e))=\sum_{f^-=e^+}f=\Delta_{\mathcal{L}G}(e)+k\,e
=k\,e\in K(\mathcal{L}G,e_*).
\]
Thus, the kernel of $\overline{\tau}$ is contained in the $k$-torsion of
$K(\mathcal{L}G,e_*)$, and to show equality it suffices to show that
$\overline{\sigma}$ is injective.

The case where $k=1$ is trivial since there are no $G$ satisfying the
hypotheses:  if $G$ is $1$-out-regular and $\mathrm{indeg}(v)\geq1$ for
all $v\in V$, then $\mathrm{indeg}(v)=1$ for all $v\in V$, including $v_*$.
So suppose that $k>1$ and that $\eta=\sum_{v\in V}a_v\,v$ is in the kernel
of~$\overline{\sigma}$.  We then have
\begin{equation}\label{eqn1}
\sigma(\eta)=\sum_{v\in V}\sum_{e^-=v}a_v\,e=\sum_{e\neq
e_*}b_e\,\Delta_{\mathcal{L}G}(e)+c\,e_*
\end{equation}
for some integers $b_e$ and $c$.  Comparing coefficients in (\ref{eqn1}) gives
\begin{equation}
  a_{e^-}=\sum_{f^+=e^-,f\neq e_*}b_f-k\,b_{e}\qquad\text{for $e\neq e_*$}.
  \label{eqn2}
\end{equation}
  Define
\[
F(v)=\frac{1}{k}\left(\sum_{f^+=v, f\neq e_*}b_f-a_v\right).
\]
From (\ref{eqn2}),
\begin{equation}
F(e^-)=b_e\qquad\text{for $e\neq e_*$}.
\label{eqn3}
\end{equation}

Since $k>1$, for each vertex $v$, we can choose an
edge $e_v\neq e_*$ with $e_v^-=v$.
By (\ref{eqn2}) and (\ref{eqn3}), for all $v\in V$, 
\[
a_v=\sum_{f^+=v,f\neq e_*}b_f-k\,b_{e_v}=\sum_{f^+=v,f\neq e_*}F(f^-)-k\,F(v).
\]
Therefore, as an element of $\mbox{cok}\,\psi$,
\begin{align*}
\eta &= \sum a_v v = \sum_{e\neq e_*} F\left(e^-\right)e^+ - \sum_{v\in V}kF(v)v\\
&= \sum_{v\in V, v\neq w_*}F(v)\left(\sum_{e^-=v} e^+- kv\right) + F(w_*)\left(\sum_{e^-=w_*,
e\neq e_*} e^+ - kw_*\right)\\
&=  \sum_{v\in V, v\neq w_*}F(v)\Delta_{G}(v)+ F(w_*)(\Delta_{G}(w_*)-v_*)\\
&= 0,
\end{align*}
which shows that $\overline{\sigma}$ is injective.
\end{proof}

\bibliography{mybib}{}
\bibliographystyle{plain}

\end{document}